\newtheorem{theorem}{Theorem}[section]
\newtheorem{lemma}[theorem]{Lemma}
\newtheorem{corollary}[theorem]{Corollary}
\newtheoremstyle{named}{}{}{\itshape}{}{\bfseries}{.}{.5em}{\thmnote{#3's }#1}
\theoremstyle{named}
\newtheoremstyle{nnamed}{}{}{\itshape}{}{\bfseries}{.}{.5em}{\thmnote{#3' }#1}
\theoremstyle{nnamed}
\def\barr{\begin{array}}
\def\earr{\end{array}}
\title{On a conjecture by Haipeng Qu}
\author{Marius T\u arn\u auceanu}
\date{November 19, 2018}
\begin{document}

\maketitle

\begin{abstract}
In this note, we prove that $D_8\times C_2^{n-3}$ is the non-elementary abelian $2$-group of order $2^n$, $n\geq 3$,
whose number of subgroups of possible orders is maximal. This solves a conjecture by Haipeng Qu \cite{7}. A formula
for counting the subgroups of an (almost) extraspecial $2$-group is also presented.
\end{abstract}
\medskip

{\small
\noindent
{\bf MSC2000\,:} Primary 20D30; Secondary 20D60, 20D99.

\noindent
{\bf Key words\,:} subgroup lattice, (elementary) abelian $2$-group, (almost/generalized) extraspecial $2$-group, Goursat's lemma, central product.}

\section{Introduction}

Let $G$ be a finite $p$-group of order $p^n$. For $k=0,1,...,n$, we denote by $s_k(G)$ the number of subgroups of order $p^k$ of $G$. The starting point for our discussion is given by Theorem 1.4 of \cite{7} which proves that if $p$ is odd and $G$ is non-elementary abelian then
\begin{equation}
s_k(G)\leq s_k(M_p\times C_p^{n-3}), \forall\, 0\leq k\leq n,
\end{equation}where
\begin{equation}
M_p=\langle a,b \mid a^p=b^p=c^p=1, [a,b]=c, [c,a]=[c,b]=1\rangle\nonumber
\end{equation}is the non-abelian group of order $p^3$ and exponent $p$. Moreover, in \cite{7} it is conjectured that the inequalities (1) also hold for $p=2$. Note that in this case there is no direct analogue of $M_p$, as a group of exponent $2$ is abelian. But the dihedral group of order $8$ is close, as it is at least generated by elements of order $2$.

In the current note we prove the inequalities (1) for $p=2$ by replacing $M_p$ with $D_8$. This completes the work of Haipeng Qu. We also give a formula for the number of subgroups of an (almost) extraspecial $2$-group depending on the number of elementary abelian subgroups of possible orders. Our main result is the following.

\begin{theorem}
Let $G$ be a finite non-elementary abelian $2$-group of order $2^n$, $n\geq 3$. Then
\begin{equation}
s_k(G)\leq s_k(D_8\times C_2^{n-3}), \forall\, 0\leq k\leq n.
\end{equation}
\end{theorem}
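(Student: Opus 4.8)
The plan is to reduce the problem to counting subgroups in the extremal case and then compare. First I would observe that every finite $2$-group $G$ of order $2^n$ has a canonical decomposition $G = E \times C_2^m$ where $E$ has no nontrivial direct factor of order $2$ (equivalently, the Frattini subgroup $\Phi(E)$ meets every minimal normal subgroup nontrivially when $E \neq 1$); here $m \geq 0$ and $E$ is non-elementary abelian precisely when $G$ is. The function $s_k$ behaves well under direct products with elementary abelian groups — there is a Goursat-type convolution formula expressing $s_k(H \times C_2^m)$ in terms of the $s_j(H)$ and the Gaussian binomial coefficients counting subgroups of $C_2^m$ — so it suffices to prove the inequality when $m$ is as small as possible, i.e. to understand the "core" non-elementary abelian $2$-groups and push the $C_2^{n-3}$ factor through.

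The heart of the argument, as the abstract advertises, is a closed formula for $s_k$ of an (almost) extraspecial $2$-group in terms of the numbers of elementary abelian subgroups of each order. I would first treat the extraspecial groups $D_8$, $Q_8$, and their central products $D_8 \ast \cdots \ast D_8$, $D_8 \ast \cdots \ast D_8 \ast Q_8$ of order $2^{1+2r}$, together with the almost extraspecial groups (central product with $C_4$) of order $2^{2+2r}$. Using Goursat's lemma for the central product structure and the known count of totally isotropic subspaces of a symplectic (or orthogonal) $\mathbb{F}_2$-space, one obtains $s_k$ for these groups; the key comparison is that among extraspecial groups of a fixed order, the "$+$-type" central product of copies of $D_8$ maximizes each $s_k$, because its associated quadratic form is split and hence has the largest number of isotropic subspaces of every dimension. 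This is the step I expect to be the main obstacle: controlling, uniformly in $k$, how the quadratic-form type governs $s_k$, and then bootstrapping from extraspecial groups to arbitrary non-elementary abelian $2$-groups.

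For the general reduction I would argue by induction on $n$. Given a non-elementary abelian $G$ of order $2^n$, either $G$ has a direct factor $C_2$ — in which case $G = H \times C_2$ with $H$ non-elementary abelian of order $2^{n-1}$, and the convolution identity together with the inductive bound $s_k(H) \leq s_k(D_8 \times C_2^{n-4})$ plus the identity $(D_8 \times C_2^{n-4}) \times C_2 = D_8 \times C_2^{n-3}$ finishes it — or $G$ is directly indecomposable with respect to $C_2$, so $\Phi(G) \neq 1$ has small "elementary abelian defect". In the latter case I would bound $s_k(G)$ by comparing $G$ to the group $E \times C_2^{n-3}$ where $E$ is an appropriate extraspecial or almost extraspecial quotient/section, reducing to the formula established above and then to the inequality $s_k(E \times C_2^{n-3}) \leq s_k(D_8 \times C_2^{n-3})$. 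The last inequality is again a statement about isotropic subspaces: $D_8 \times C_2^{n-3}$ corresponds, at the level of $G/\Phi(G)$-data, to the most degenerate possible bilinear form, which maximizes subgroup counts. Throughout, the main technical engine is Goursat's lemma for central and direct products, and the main conceptual point is that "maximal number of subgroups of each order" is equivalent to "minimal nondegeneracy of the commutator form," which $D_8 \times C_2^{n-3}$ achieves.
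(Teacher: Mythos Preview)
Your proposal has the right ingredients at the periphery (Goursat, extraspecial groups, isotropic subspaces) but is missing the central reduction step, and without it the argument does not go through.

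The paper's engine is the inequality $s_k(G)\le s_k(G/M\times C_2)$ for any normal subgroup $M$ of order $2$ (Qu's Lemma~1.3, iterated to Lemma~2.2). This is what lets one pass from an \emph{arbitrary} non-elementary abelian $2$-group to something with $|\Phi|\le 2$: if $G/G'$ is not elementary abelian one quotients by all of $G'$ and lands in the abelian case; if $G/G'$ is elementary abelian one quotients by a hyperplane of $G'=\Phi(G)$ and lands in a generalized extraspecial group, hence in $(\text{almost extraspecial})\times(\text{elementary abelian})$. Your dichotomy ``$G$ has a $C_2$ direct factor, or not'' does not accomplish this. In the second branch you write that you would ``compare $G$ to $E\times C_2^{n-3}$ where $E$ is an appropriate extraspecial or almost extraspecial quotient/section,'' but no mechanism for that comparison is given, and none is available without the Qu inequality: there are plenty of $2$-groups with no $C_2$ direct factor and $|\Phi(G)|>2$ (any cyclic $C_{2^n}$, any group of class $\ge 3$, etc.), and for these one cannot simply replace $G$ by an extraspecial section and hope that $s_k$ only goes up. The abelian case is likewise not handled by your scheme; the paper treats it separately by first reducing to $C_4\times C_2^{n-2}$ via the same lemma and then computing both sides explicitly.

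A second issue is the target of your extraspecial comparison. You aim to show that among extraspecial groups of a fixed order the $+$-type maximizes each $s_k$. That is true, but it is not what is needed: after the reduction one must compare an (almost) extraspecial $G_2$ of order $2^q$ with $D_8\times C_2^{q-3}$, which is \emph{not} extraspecial (its commutator form has a huge radical). The paper makes this comparison not by isotropic-subspace formulas but by first bounding the number of cyclic subgroups (Lemma~1.4, from Garonzi--Lima), converting that into $e_2(G_2)\le e_2(D_8\times C_2^{q-3})$, and then deducing the inequality for all $e_i$ and for all elementary abelian \emph{section} counts (Corollary~2.8); the section counts, not just the subgroup counts, are what Goursat requires when you tensor up by $C_2^{n-q}$. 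Your quadratic-form approach could in principle replace this step, but you would still need to compare against the degenerate form of $D_8\times C_2^{q-3}$, not against another nondegenerate one, and you would need the section-level inequality, not merely $s_k$.
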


Throughout this paper, we will use $A*B$ to denote the (amalgamated) central product of the groups $A$ and $B$ having isomorphic centres, and $X^{*r}$ to denote the central product of $r$ copies of the group $X$. Also, we will denote by $\binom{n}{k}_p$ the number of subgroups of order $p^k$ in an elementary abelian $p$-group of order $p^n$.

We recall several basic definitions and results on $2$-groups that will be useful to us. A finite $2$-group is called:
\begin{itemize}
\item[-] \textit{extraspecial} if $Z(G)=G'=\Phi(G)$ has order $2$;
\item[-] \textit{almost extraspecial} if $G'=\Phi(G)$ has order $2$ and $Z(G)\cong C_4$;
\item[-] \textit{generalized extraspecial} if $G'=\Phi(G)$ has order $2$ and $G'\subseteq Z(G)$.
\end{itemize}The structure of these groups is well-known (see e.g. Theorem 2.3 of \cite{2} and Lemma 3.2 of \cite{8}):

\begin{lemma}
Let $G$ be a finite $2$-group.
\begin{itemize}
\item[{\rm a)}] If $G$ is extraspecial, then $|G|=2^{2r+1}$ for some positive integer $r$ and either $G\cong D_8^{*r}$ or $G\cong Q_8*D_8^{*(r-1)}$.
\item[{\rm b)}] If $G$ is almost extraspecial, then $|G|=2^{2r+2}$ for some positive integer $r$ and $G\cong D_8^{*r}*C_4$.
\item[{\rm c)}] If $G$ is generalized extraspecial, then either $G\cong E\times A$ or $G\cong (E*C_4)\times A$, where $E$ is an extraspecial $2$-group and $A$ is an elementary abelian $2$-group.
\end{itemize}
\end{lemma}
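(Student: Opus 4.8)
The plan is to translate the entire statement into linear algebra over $\mathbb{F}_2$ and read off the three cases from the classification of quadratic forms. In all three cases $\Phi(G)=G'$ has order $2$; write $G'=\langle z\rangle$. Since $\Phi(G)=\langle z\rangle$, the quotient $V:=G/\langle z\rangle$ is elementary abelian, hence an $\mathbb{F}_2$-vector space, and for every $g\in G$ one has $g^2\in\Phi(G)=\langle z\rangle$. First I would define two maps on $V$: the commutator pairing $b(\bar x,\bar y)=[x,y]\in\langle z\rangle\cong\mathbb{F}_2$ and the squaring map $q(\bar x)$ determined by $x^2=z^{\,q(\bar x)}$. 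Using that $\langle z\rangle$ is central of order $2$ and that $G'$ is generated by $z$, both are well defined on cosets, $b$ is an alternating bilinear form, $q$ is a quadratic form, and $q(\bar x+\bar y)=q(\bar x)+q(\bar y)+b(\bar x,\bar y)$, i.e.\ $b$ is the polarization of $q$. The radical of $b$ is exactly $\overline{Z(G)}=Z(G)/\langle z\rangle$, because $\bar x$ lies in the radical iff $x$ centralizes $G$.

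For part a), extraspeciality gives $Z(G)=\langle z\rangle$, so $b$ is non-degenerate on $V$; a non-degenerate alternating form forces $\dim V=2r$, whence $|G|=2^{2r+1}$. Then I would invoke the classification of non-degenerate quadratic forms over $\mathbb{F}_2$ (Witt/Arf): $(V,q)$ is an orthogonal sum of $2$-dimensional non-degenerate subspaces, each either hyperbolic or anisotropic. A hyperbolic plane corresponds to the subgroup $D_8$ (two involutions commuting up to $z$, with $q=0$ on a basis) and an anisotropic plane to $Q_8$ (whose non-central generators square to $z$); orthogonality of subspaces corresponds to the central product $*$ amalgamating $\langle z\rangle$. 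Thus $G$ is a central product of copies of $D_8$ and $Q_8$, and since $Q_8*Q_8\cong D_8*D_8$ one may assume at most one $Q_8$ factor, giving either $D_8^{*r}$ or $Q_8*D_8^{*(r-1)}$, the two being distinguished by the Arf invariant of $q$.

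For parts b) and c) the only new feature is the radical $R:=\overline{Z(G)}$, on which $b$ vanishes, so $q|_R$ is $\mathbb{F}_2$-linear. I would split $V=W\perp R$ with $W$ non-degenerate of dimension $2r$; the preimage $E$ of $W$ in $G$ is then extraspecial, with centre $\langle z\rangle$, and is handled by a). In case b), almost-speciality means $Z(G)\cong C_4$, so $R$ is $1$-dimensional and $q|_R\neq0$ (the generator $w$ of $Z(G)$ satisfies $w^2=z$); adjoining $\langle w\rangle\cong C_4$ gives $G\cong E*C_4$, and the distinction between the two types of $E$ collapses because $Q_8*C_4\cong D_8*C_4$, yielding the unique group $D_8^{*r}*C_4$ with $|G|=2^{2r+2}$. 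In case c), $R$ is arbitrary; here $q|_R$ is linear, its kernel $R_0$ has index $1$ or $2$ in $R$, and the preimage of $R_0$ is central and elementary abelian, so a complement $A$ to $\langle z\rangle$ inside it is an elementary abelian group that will split off as a direct factor. If $q|_R=0$ then $Z(G)$ is elementary abelian and $G\cong E\times A$; if $q|_R\neq0$ then one further central element of order $4$ supplies a $C_4$ and $G\cong(E*C_4)\times A$, exactly the two alternatives stated.

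The routine part is the bilinear-algebra bookkeeping; the two genuinely load-bearing facts are the central-product isomorphisms $Q_8*Q_8\cong D_8*D_8$ and $Q_8*C_4\cong D_8*C_4$, which are the group-theoretic shadows of ``the orthogonal sum of two anisotropic $\mathbb{F}_2$-planes is hyperbolic'' and ``adjoining a radical vector with $q=1$ erases the Arf invariant.'' I expect the main obstacle to be verifying these absorption isomorphisms explicitly and, in case c), checking that the chosen complement $A$ really splits as a direct factor, i.e.\ that $A$ is central with $A\cap E=1$ and $|E||A|=|G|$, rather than the form classification itself, which is standard.
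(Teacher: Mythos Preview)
The paper does not prove this lemma at all: it is quoted as a well-known structural result, with the proof outsourced to Theorem~2.3 of \cite{2} (Bouc--Mazza) and Lemma~3.2 of \cite{8} (Stancu). So there is no ``paper's own proof'' to compare against beyond those citations.

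That said, your proposal is correct and is essentially the standard argument one finds in those references: pass to $V=G/\Phi(G)$, equip it with the commutator form $b$ and the squaring quadratic form $q$, identify the radical of $b$ with $Z(G)/\Phi(G)$, and then read off the three cases from the classification of $(V,q)$ over $\mathbb{F}_2$. The identifications of hyperbolic and anisotropic planes with $D_8$ and $Q_8$, and the absorption isomorphisms $Q_8*Q_8\cong D_8*D_8$ and $Q_8*C_4\cong D_8*C_4$, are exactly the right load-bearing facts. One small point worth making explicit in part~c): the hypothesis $\Phi(G)=\langle z\rangle$ forces every element of $Z(G)$ to square into $\langle z\rangle$, so $Z(G)$ is abelian of exponent at most $4$ with all elements of order $4$ squaring to $z$; hence $Z(G)\cong C_2^m$ or $C_4\times C_2^{m-1}$, which is what guarantees that your complement $A$ can be chosen and that the direct-factor verification goes through cleanly.
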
The key result of \cite{7} is Theorem 1.3. For $p=2$, it states the following.

\begin{lemma}
Let $G$ be a finite $2$-group of order $2^n$ and $M$ be a normal subgroup of order $2$ of $G$. Then
\begin{equation}
s_k(G)\leq s_k(G/M\times C_2), \forall\, 0\leq k\leq n.\nonumber
\end{equation}
\end{lemma}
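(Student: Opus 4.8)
\emph{Proof proposal.} The plan is to split the count of subgroups of a given order in $G$ and in $G/M\times C_2$ according to how they sit relative to a fixed central involution, and then compare the two decompositions term by term. Write $\bar G=G/M$, let $\pi\colon G\to\bar G$ be the canonical projection, and note at the outset that a normal subgroup of order $2$ in a $2$-group is central, so $M\le Z(G)$. A subgroup $H\le G$ of order $2^k$ either contains $M$ or meets it trivially. Those containing $M$ correspond bijectively, via $H\mapsto H/M$, to the subgroups of order $2^{k-1}$ of $\bar G$, hence number $s_{k-1}(\bar G)$ (with the convention $s_{-1}:=0$). For $H$ with $H\cap M=1$ one has $|HM|=2^{k+1}$, and I would record the subgroup $\bar H:=HM/M\le\bar G$ of order $2^k$; the fiber of the map $H\mapsto\bar H$ over a fixed $\bar H$ is exactly the set of complements of $M$ in $K:=\pi^{-1}(\bar H)$, a group of order $2^{k+1}$ containing the central subgroup $M$.

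Next I would count these complements. Since $M$ is central of order $2$ in $K$, once one complement is fixed every complement is in bijection with $\operatorname{Hom}(K/M,M)\cong\operatorname{Hom}(\bar H,C_2)$ (send a complement $H'$ to the homomorphism $h\mapsto h^{-1}h'$, where $h'$ is the unique element of $H'$ in the coset $hM$). Hence the number of complements is $0$ or $2^{d(\bar H)}$, where $d(\bar H)$ denotes the minimal number of generators of $\bar H$ and $|\operatorname{Hom}(\bar H,C_2)|=2^{d(\bar H)}$. In either case the fiber over $\bar H$ has at most $2^{d(\bar H)}$ elements, and summing over all subgroups $\bar H\le\bar G$ of order $2^k$ yields
\begin{equation}
s_k(G)\ \le\ s_{k-1}(\bar G)+\sum_{\substack{\bar H\le\bar G\\ |\bar H|=2^k}}2^{d(\bar H)}.\nonumber
\end{equation}

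Then I would compute $s_k(\bar G\times C_2)$ exactly by the analogous dichotomy with respect to the direct factor $C_2=\langle t\rangle$. A subgroup of order $2^k$ containing $\langle t\rangle$ is $\bar L\times\langle t\rangle$ for a unique subgroup $\bar L\le\bar G$ of order $2^{k-1}$, so these number $s_{k-1}(\bar G)$. A subgroup $L$ of order $2^k$ with $L\cap\langle t\rangle=1$ projects isomorphically onto $\bar L:=\pi(L)$, a subgroup of order $2^k$, and is precisely the graph $\{(x,\phi(x)):x\in\bar L\}$ of a homomorphism $\phi\colon\bar L\to C_2$; conversely each such graph is a subgroup with the required properties, and distinct $\phi$ give distinct $L$. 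Hence the subgroups of order $2^k$ meeting $\langle t\rangle$ trivially number $\sum_{\bar L}2^{d(\bar L)}$, summed over subgroups $\bar L\le\bar G$ of order $2^k$, and therefore
\begin{equation}
s_k(\bar G\times C_2)\ =\ s_{k-1}(\bar G)+\sum_{\substack{\bar L\le\bar G\\ |\bar L|=2^k}}2^{d(\bar L)}.\nonumber
\end{equation}
Comparing this with the displayed bound gives $s_k(G)\le s_k(\bar G\times C_2)$ for every $0\le k\le n$, which is the assertion.

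\textbf{Where the difficulty lies.} The only non-formal step is the count of complements to a central subgroup of order $2$: one needs that such complements are either absent or parametrized by $\operatorname{Hom}(K/M,C_2)$, and that this set has size $2^{d(K/M)}$. It is also worth observing (via $\Phi(K/M)=\Phi(K)M/M$) that $d(K/M)=d(K)-1$ exactly when $M\not\subseteq\Phi(K)$, which is what makes the term-by-term comparison tight — although, as the argument shows, only the inequality for each fiber is actually needed. Everything else is routine bookkeeping with Goursat-type parametrizations of subgroups of a product, and the extreme cases $k=0,n$ are handled correctly by the conventions above.
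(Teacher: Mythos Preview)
Your argument is correct. The decomposition according to whether a subgroup contains the central involution $M$ (respectively $1\times C_2$), together with the standard parametrization of complements of a central subgroup of order $2$ by $\operatorname{Hom}(K/M,C_2)$, yields exactly the inequality claimed; the bookkeeping checks out, including the identification $|\operatorname{Hom}(\bar H,C_2)|=2^{d(\bar H)}$ via the Frattini quotient.

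There is, however, nothing to compare against in the present paper: this lemma is not proved here but quoted as Theorem~1.3 of Qu~\cite{7} (stated for general $p$ there, and specialized to $p=2$). Your write-up therefore supplies a self-contained proof where the paper simply invokes the reference. For what it is worth, your approach --- splitting and bounding fibers by complements of a central $C_2$ --- is essentially the natural one, and is in the same spirit as Qu's original argument; the key observation is precisely that in $\bar G\times C_2$ every fiber over $\bar L$ is \emph{full} (all $2^{d(\bar L)}$ homomorphisms occur as graphs), whereas in $G$ the fiber over $\bar H$ may be empty when the extension $\pi^{-1}(\bar H)$ does not split over $M$. Your closing remark about when $d(K/M)=d(K)-1$ is a nice diagnostic for when equality can fail, though as you note it is not needed for the inequality itself.
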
We also present a result that follows from Corollary 2 of \cite{3}.

\begin{lemma}
Let $G$ be a finite non-elementary abelian $2$-group of order $2^n$, $n\geq 3$, and $L_1(G)$ be the set of cyclic subgroups of $G$. Then
\begin{equation}
|L_1(G)|\leq 7\cdot 2^{n-3}=|L_1(D_8\times C_2^{n-3})|.\nonumber
\end{equation}
\end{lemma}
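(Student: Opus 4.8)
The plan is to reduce the statement to the known bound on the number of elements of order at most $2$ in such a group. Write $\varphi$ for Euler's function and, for $g\in G$, let $o(g)$ denote its order. Since each cyclic subgroup of order $d$ has exactly $\varphi(d)$ generators while every element of $G$ generates exactly one cyclic subgroup, we have the identity
\begin{equation}
|L_1(G)|=\sum_{g\in G}\frac{1}{\varphi(o(g))}\,.\nonumber
\end{equation}
Because $G$ is a $2$-group, $o(g)$ is always a power of $2$; since $\varphi(1)=\varphi(2)=1$ whereas $\varphi(2^i)\geq 2$ for $i\geq 2$, separating the sum according to whether $g^2=1$ gives
\begin{equation}
|L_1(G)|\;\leq\;|\Omega|+\tfrac12\bigl(2^n-|\Omega|\bigr)\;=\;\tfrac12\bigl(2^n+|\Omega|\bigr),\nonumber
\end{equation}
where $\Omega=\{g\in G\mid g^2=1\}$; moreover this is an equality precisely when $G$ has no element of order $\geq 8$.

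Next I would invoke Corollary 2 of \cite{3}, which for a finite non-elementary abelian $2$-group of order $2^n$ yields $|\Omega|\leq\tfrac34\cdot 2^n=6\cdot 2^{n-3}$ (equivalently, $G$ has at most $6\cdot 2^{n-3}-1$ involutions); the non-elementary abelian hypothesis is essential, as $\Omega=G$ for an elementary abelian group. Feeding this into the inequality above,
\begin{equation}
|L_1(G)|\;\leq\;\tfrac12\bigl(2^n+6\cdot 2^{n-3}\bigr)\;=\;\tfrac12\bigl(8\cdot 2^{n-3}+6\cdot 2^{n-3}\bigr)\;=\;7\cdot 2^{n-3},\nonumber
\end{equation}
which is the asserted inequality.

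It remains to verify that $D_8\times C_2^{n-3}$ is extremal. This group has exponent $4$, so the first inequality above is an equality for it; and, since $(g,v)^2=1$ in $D_8\times C_2^{n-3}$ exactly when $g^2=1$ in $D_8$, and the elements of $D_8$ squaring to $1$ are the identity, the central involution and the four reflections, it has precisely $6\cdot 2^{n-3}$ such elements, so the second inequality is also an equality. Hence $|L_1(D_8\times C_2^{n-3})|=7\cdot 2^{n-3}$, which one may equally check by a direct count: one trivial subgroup, $6\cdot 2^{n-3}-1$ subgroups of order $2$, and $2^{n-3}$ cyclic subgroups of order $4$.

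The only substantial ingredient here is the estimate $|\Omega|\leq\tfrac34|G|$ for non-elementary abelian $2$-groups, that is, Corollary 2 of \cite{3}; everything else is the totient identity and a routine count in $D_8\times C_2^{n-3}$. The delicacy in proving that estimate lies in the fact that $\Omega$ need not be a subgroup — in $D_8$ it has six elements and is not multiplicatively closed — so a naive coset count does not work directly; one natural route is to treat the groups possessing an elementary abelian subgroup $A$ of index $2$ separately, writing $\Omega=A\cup C_A(g)g$ for a suitable involution $g$ and using that the fixed subspace of a nontrivial involutory $\mathbb{F}_2$-linear map on $A$ has index at least $2$, and then dispose of the remaining, involution-poor cases.
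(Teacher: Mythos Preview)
Your argument is correct. The paper does not actually prove this lemma: it simply records it as ``a result that follows from Corollary~2 of \cite{3}'' and moves on. What you have written is an explicit derivation, so there is more here than in the paper.

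Two remarks on the comparison. First, Corollary~2 of \cite{3} (Garonzi--Lima) is phrased directly in terms of the ratio $|L_1(G)|/|G|$ --- indeed, that paper's whole framework is built on the totient identity $|L_1(G)|=\sum_{g\in G}1/\varphi(o(g))$ that you open with --- so the citation already delivers the inequality $|L_1(G)|\le\tfrac78|G|$ without passing through~$|\Omega|$. Your reduction to the involution count $|\Omega|\le\tfrac34|G|$ is therefore a slight detour: that bound is a separate, older fact (going back to G.\,A.~Miller; see also \cite{1}), and your last paragraph sketches a standard proof of it. So the attribution of the $|\Omega|$ bound to Corollary~2 of \cite{3} is not quite right, although the mathematics is sound either way.

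Second, your verification that $D_8\times C_2^{n-3}$ attains the bound is clean and complete; the paper takes this for granted.
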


\section{Proofs of the main results}

First of all, we recall the well-known Goursat’s lemma (see e.g. (4.19) of \cite{9}, I) that will be intensively used in our proofs.

\begin{theorem}
Let $A$ and $B$ be two finite groups. Then every subgroup $H$ of the direct product $A\times B$ is completely determined by a quintuple $(A_1,A_2,B_1,B_2,\newline\varphi)$, where $A_1\unlhd A_2\leq A$, $B_1\unlhd B_2\leq B$ and $\varphi:A_2/A_1\longrightarrow B_2/B_1$ is an isomorphism, more exactly $H=\{(a,b)\in A_2\times B_2\mid \varphi(aA_1)=bB_1\}$. Moreover, we have $|H|=|A_1||B_2|=|A_2||B_1|$.
\end{theorem}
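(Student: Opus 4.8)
The statement is the classical Goursat correspondence, so the plan is the standard one: build the map from subgroups to quintuples explicitly, verify it is well defined, and then exhibit its inverse. I would present it constructively in two directions rather than as an abstract bijection, since the explicit formulas are exactly what the later proofs in the paper need.

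First I would fix the coordinate projections $\pi_A\colon A\times B\to A$ and $\pi_B\colon A\times B\to B$ and, given $H\leq A\times B$, set $A_2=\pi_A(H)$, $B_2=\pi_B(H)$, $A_1=\{a\in A\mid (a,1)\in H\}$ and $B_1=\{b\in B\mid (1,b)\in H\}$. It is immediate that these are subgroups with $A_1\leq A_2$ and $B_1\leq B_2$. Normality $A_1\unlhd A_2$ follows by conjugation inside $H$: if $(a_2,b_2)\in H$ and $(a_1,1)\in H$, then $(a_2,b_2)(a_1,1)(a_2,b_2)^{-1}=(a_2a_1a_2^{-1},1)\in H$, so $a_2a_1a_2^{-1}\in A_1$; symmetrically $B_1\unlhd B_2$. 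This is what makes $A_2/A_1$ and $B_2/B_1$ honest quotient groups, so that talking about an isomorphism between them is meaningful.

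Next I would define $\varphi\colon A_2/A_1\to B_2/B_1$ by $\varphi(a_2A_1)=b_2B_1$ whenever $(a_2,b_2)\in H$, and check: well-definedness in the second coordinate, since $(a_2,b_2),(a_2,b_2')\in H$ force $(1,b_2^{-1}b_2')\in H$ and hence $b_2B_1=b_2'B_1$; well-definedness modulo $A_1$, using $(a_1,1)\in H$ to slide $a_2$ to $a_2a_1$; the homomorphism property, which is just closure of $H$ under multiplication; surjectivity, which is the definition of $B_2$; and injectivity, since $\varphi(a_2A_1)=B_1$ yields some $b_2\in B_1$ with $(a_2,b_2)\in H$, whence $(a_2,1)=(a_2,b_2)(1,b_2)^{-1}\in H$ and $a_2\in A_1$. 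With $\varphi$ in hand, the identity $H=\{(a,b)\in A_2\times B_2\mid\varphi(aA_1)=bB_1\}$ is a short containment argument in each direction, the nontrivial inclusion again being the ``slide by an element of $B_1$'' trick. This third step is where essentially all the work lives; there is no genuine obstacle, only the need to keep the four verifications for $\varphi$ straight.

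Finally I would check the reverse construction to justify ``completely determined'': for any quintuple $(A_1,A_2,B_1,B_2,\varphi)$ with $A_1\unlhd A_2\leq A$, $B_1\unlhd B_2\leq B$ and $\varphi$ an isomorphism, the set $H=\{(a,b)\in A_2\times B_2\mid\varphi(aA_1)=bB_1\}$ is a subgroup of $A\times B$, and the two constructions are mutually inverse — feeding this $H$ through the first construction recovers the same data $A_1,A_2,B_1,B_2,\varphi$, so distinct quintuples give distinct subgroups. The order formula then drops out: $\pi_A|_H\colon H\to A_2$ is surjective with kernel $\{1\}\times B_1$, giving $|H|=|A_2||B_1|$, and symmetrically $|H|=|A_1||B_2|$, which also forces $|A_1||B_2|=|A_2||B_1|$.
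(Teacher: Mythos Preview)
The paper does not actually prove this theorem: it simply recalls it as the classical Goursat lemma with a reference to Suzuki. Your argument is the standard, correct proof of Goursat's correspondence, and the explicit description of $H$ and the order formula you derive are exactly what the paper goes on to use.
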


Next we remark that Lemma 1.3 can be easily generalized in the following way.

\begin{lemma}
Let $G$ be a finite $2$-group of order $2^n$ and $M$ be a normal subgroup of order $2^r$ of $G$. Then
\begin{equation}
s_k(G)\leq s_k(G/M\times C_2^r), \forall\, 0\leq k\leq n.\nonumber
\end{equation}
\end{lemma}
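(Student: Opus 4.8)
The plan is to prove the lemma by induction on $r$, using Lemma 1.3 as both the base case and the engine of the inductive step. For $r=1$ there is nothing to prove, since the statement is exactly Lemma 1.3. So suppose $r\geq 2$ and that the conclusion is already known for every finite $2$-group and every normal subgroup of order $2^{r-1}$.

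Given $M\unlhd G$ with $|M|=2^r$, the first step is to peel off one copy of $C_2$ using Lemma 1.3. To do this I need a subgroup of $M$ of order $2$ that is normal in $G$: since $G$ is a $2$-group, $M\cap Z(G)\neq 1$, so I may pick $M_1\leq M\cap Z(G)$ with $|M_1|=2$, and such an $M_1$ is automatically normal in $G$. Lemma 1.3 applied to the pair $(G,M_1)$ gives
\[
s_k(G)\leq s_k(G/M_1\times C_2),\qquad 0\leq k\leq n .
\]
Now $G/M_1\times C_2$ is again a $2$-group of order $2^n$, and $\overline{M}:=(M/M_1)\times\{1\}$ is a normal subgroup of it of order $2^{r-1}$. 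The induction hypothesis applied to $\bigl(G/M_1\times C_2,\overline{M}\bigr)$ yields
\[
s_k(G/M_1\times C_2)\leq s_k\bigl((G/M_1\times C_2)/\overline{M}\times C_2^{r-1}\bigr),\qquad 0\leq k\leq n .
\]
Finally, by the isomorphism theorems $(G/M_1\times C_2)/\overline{M}\cong\bigl((G/M_1)/(M/M_1)\bigr)\times C_2\cong (G/M)\times C_2$, so the right-hand side equals $s_k(G/M\times C_2^r)$; chaining the two displayed inequalities gives the claim. (An equivalent way to organize the argument is to fix a chief series $1=M_0\lhd M_1\lhd\cdots\lhd M_r=M\lhd\cdots\lhd G$ with each $M_i\unlhd G$ and $|M_i|=2^i$ — such a series exists since chief factors of a $p$-group have order $p$ — and then apply Lemma 1.3 exactly $r$ times, at stage $i$ to the group $G/M_{i-1}\times C_2^{i-1}$ and its order-$2$ normal subgroup $(M_i/M_{i-1})\times\{1\}$, each time replacing a factor $C_2^{i-1}$ by $C_2^{i}$.)

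I do not expect a genuine obstacle here: the argument is a bare iteration of Lemma 1.3. The only points needing care are routine ones — recording that $M$ contains a $G$-normal subgroup of order $2$ (equivalently, that the chief series through $M$ exists), and keeping the isomorphisms straight so that after each application of Lemma 1.3 the resulting group $G/M_{i-1}\times C_2^{i-1}$ has the same shape and Lemma 1.3 can be re-applied. In particular the counter $k$ always ranges over $0\leq k\leq n$ because every auxiliary group in the chain has order $2^n$.
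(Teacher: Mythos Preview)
Your proof is correct and follows exactly the approach of the paper, which simply says ``Take a chief series of $G$ containing $M$ and use induction on $r$.'' You have supplied the details the paper omits, including the existence of a $G$-normal subgroup of order $2$ inside $M$ and the bookkeeping of the isomorphisms at each stage.
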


\begin{proof}
Take a chief series of $G$ containing $M$ and use induction on $r$.
\end{proof}

The following lemma shows that the inequalities (2) hold for finite abelian $2$-groups.

\begin{lemma}
Let $G$ be a finite abelian $2$-group of order $2^n$, $n\geq 3$. If $G$ is non-elementary abelian, then
\begin{equation}
s_k(G)\leq s_k(D_8\times C_2^{n-3}), \forall\, 0\leq k\leq n.\nonumber
\end{equation}
\end{lemma}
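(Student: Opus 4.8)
The plan is to reduce, via Lemma 2.2 (the generalized version of Lemma 1.3), to a single carefully chosen representative in each order $2^n$ and then verify the inequality for that representative directly. Concretely, let $G$ be abelian of order $2^n$ and non-elementary abelian, so $G\cong C_{2^{a_1}}\times\cdots\times C_{2^{a_t}}$ with some $a_i\geq 2$. Since $G$ has exponent $\geq 4$, it has a normal (indeed characteristic) subgroup $M$ of order $2$ with $G/M$ still abelian of order $2^{n-1}$; iterating, one sees that among abelian groups of order $2^n$ the subgroup-counting functions $s_k$ are maximized (coordinatewise in $k$) by pushing as much of the group as possible into the elementary abelian part. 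More precisely, I would first prove the sub-claim that among \emph{all} non-elementary abelian $2$-groups of order $2^n$, the one maximizing every $s_k$ is $C_4\times C_2^{n-2}$: for any such $G$, pick a cyclic direct factor $C_{2^a}$ with $a\geq 2$, let $M$ be its subgroup of order $2$, and apply Lemma 1.3 to get $s_k(G)\le s_k(G/M\times C_2)$; now $G/M\times C_2$ is again abelian of order $2^n$, has strictly smaller exponent-sum in a suitable sense, and is non-elementary abelian unless $G\cong C_4\times C_2^{n-2}$ already — so induction on the exponent (or on $\sum(a_i-1)$) terminates exactly at $C_4\times C_2^{n-2}$.

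It then remains to compare $C_4\times C_2^{n-2}$ with $D_8\times C_2^{n-3}$, i.e. to show $s_k(C_4\times C_2^{n-2})\le s_k(D_8\times C_2^{n-3})$ for all $k$. For this I would compute both sides explicitly using Goursat's lemma (Theorem 2.1). For $A\times C_2^{m}$ with $A\in\{C_4,D_8\}$, every subgroup is described by a Goursat quintuple; the subgroups of a fixed order $2^k$ split according to the order of the projection to $A$ and the isomorphism type of the "shared" quotient, and the count reduces to sums of Gaussian binomial coefficients $\binom{m}{j}_2$ weighted by the number of relevant subgroups/quotients of $A$ of each order. Since $C_4$ and $D_8$ are small, these weights are a short finite table (subgroups of $C_4$: one of each order $1,2,4$; subgroups of $D_8$: one of order $1$, three of order $2$, three of order $4$ — two being $C_2^2$ and one being $C_4$ — one of order $8$; plus the corresponding automorphism counts governing the number of gluing isomorphisms $\varphi$). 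Plugging these in yields closed forms in $n$ and $k$ for both $s_k(C_4\times C_2^{n-2})$ and $s_k(D_8\times C_2^{n-3})$, and the desired inequality becomes an elementary inequality between such expressions, checkable term by term.

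The main obstacle is the Goursat bookkeeping in the second step: one must correctly enumerate, for each pair $(A_1\unlhd A_2\le A,\ \varphi)$ with $A\in\{C_4,D_8\}$, how many subgroups of the complementary $C_2^m$ can be glued and with how many isomorphisms $\varphi$, and then organize the resulting double sum by the total order $2^k=|A_1|\,|B_2|$. A clean way to manage this is to note that $s_k$ of any $H\times C_2^m$ can be written as $\sum_{j}\big(\#\{\text{pairs }(U\unlhd V\le H)\text{ with }|V/U|=2^{j} \text{ up to a factor counting }\varphi\}\big)\cdot(\text{number of subgroups of }C_2^m\text{ of appropriate order}) $, and to handle the $\varphi$-count by splitting on whether the common quotient is trivial, $C_2$, or $C_2^2$ (the only possibilities here). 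I expect no genuine difficulty beyond care; the inequality $s_k(C_4\times C_2^{n-2})\le s_k(D_8\times C_2^{n-3})$ should hold with room to spare for most $k$, with the only tight cases near $k=1$ and $k=n-1$, where Lemma 1.5 (the cyclic-subgroup bound) can be invoked as an independent check for $k=1$ and, dually, a similar count for maximal subgroups handles $k=n-1$.
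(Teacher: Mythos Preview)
Your proposal is correct and follows essentially the same two-step strategy as the paper: first reduce to $C_4\times C_2^{n-2}$ (the paper does this in one shot via Lemma~2.2, choosing $M$ of order $2^{n-2}$ with $G/M\cong C_4$, rather than iterating Lemma~1.3), then compare with $D_8\times C_2^{n-3}$ by an explicit Goursat count---though the paper writes the abelian side as $(C_4\times C_2)\times C_2^{n-3}$ so that both Goursat computations share the same factor $C_2^{n-3}$, which makes the final term-by-term comparison of Gaussian-binomial expressions cleaner than your asymmetric $C_4\times C_2^{n-2}$ versus $D_8\times C_2^{n-3}$ setup. One small slip to fix when you execute the computation: $D_8$ has five subgroups of order~$2$, not three.
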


\begin{proof}
Since $G$ is not elementary abelian, we infer that it has a subgroup $M$ of order $2^{n-2}$ such that $G/M\cong C_4$. Then Lemma 2.2 leads to
\begin{equation}
s_k(G)\leq s_k(C_4\times C_2^{n-2}), \forall\, 0\leq k\leq n,\nonumber
\end{equation}and so it suffices to prove that
\begin{equation}
s_k(G_1\times C_2^{n-3})\leq s_k(D_8\times C_2^{n-3}), \forall\, 0\leq k\leq n,
\end{equation}where $G_1=C_4\times C_2$. By Theorem 2.1, we know that a subgroup of order $2^k$ of $G_1\times C_2^{n-3}$ is completely determined by a quintuple $(A_1,A_2,B_1,B_2,\varphi)$, where $A_1\unlhd A_2\leq G_1$, $B_1\unlhd B_2\leq C_2^{n-3}$, $\varphi:A_2/A_1\longrightarrow B_2/B_1$ is an isomorphism, and $|A_2||B_1|=2^k$. Note that $\varphi$ can be chosen in a unique way for the elementary abelian sections of orders $1$ and $2$ of $G_1$, and in $6=|{\rm Aut}(C_2^2)|$ ways for the elementary abelian sections of order $4$ of $G_1$. We distinguish the following cases:
\begin{itemize}
\item[{\rm a)}] $A_2=1$.\\
Then $A_1=1$ and $B_1=B_2$ is one of the $\binom{n-3}{k}_2$ subgroups of order $2^k$ of $C_2^{n-3}$. Clearly, these determine $\binom{n-3}{k}_2$ distinct subgroups of $G_1\times C_2^{n-3}$.
\item[{\rm b)}] $A_2$ is one of the three subgroups of order $2$ of $G_1$.\\
Then $B_1$ is of order $2^{k-1}$ and can be chosen in $\binom{n-3}{k-1}_2$ ways. If $A_1=A_2$ then $B_2=B_1$, while if $A_1=1$ then $B_2$ is one of the $2^{n-k-2}-1$ subgroups of order $2^k$ of $C_2^{n-3}$ containing $B_1$. So, in this case we have
\begin{equation}
3\cdot\binom{n-3}{k-1}_2+3\cdot(2^{n-k-2}-1)\binom{n-3}{k-1}_2=3\cdot 2^{n-k-2}\binom{n-3}{k-1}_2\nonumber
\end{equation}distinct subgroups of $G_1\times C_2^{n-3}$.
\item[{\rm c)}] $A_2$ is one of the two cyclic subgroups of order $4$ of $G_1$.\\
Then $B_1$ is of order $2^{k-2}$ and can be chosen in $\binom{n-3}{k-2}_2$ ways. If $A_1=A_2$ then $B_2=B_1$, while if $|A_1|=2$ then $B_2$ is one of the $2^{n-k-1}-1$ subgroups of order $2^{k-1}$ of $C_2^{n-3}$ containing $B_1$. So, in this case we have
\begin{equation}
2\cdot\binom{n-3}{k-2}_2+2\cdot(2^{n-k-1}-1)\binom{n-3}{k-1}_2=2^{n-k}\binom{n-3}{k-2}_2\nonumber
\end{equation}distinct subgroups of $G_1\times C_2^{n-3}$.
\item[{\rm d)}] $A_2$ is the unique subgroup isomorphic to $C_2^2$ of $G_1$.\\
Again, $B_1$ is of order $2^{k-2}$ and can be chosen in $\binom{n-3}{k-2}_2$ ways. If $A_1=A_2$ then $B_2=B_1$; if $|A_1|=2$ then $B_2$ is one of the $2^{n-k-1}-1$ subgroups of order $2^{k-1}$ of $C_2^{n-3}$ containing $B_1$; if $A_1=1$ then $B_2$ is one of the $\binom{n-k-1}{2}_2$ subgroups of order $2^k$ of $C_2^{n-3}$ containing $B_1$. One obtains
$$\binom{n-3}{k-2}_2\hspace{-2mm}+3(2^{n{-}k{-}1}{-}1)\binom{n-3}{k-2}_2\hspace{-2mm}+6\binom{n{-}k{-}1}{2}_2\hspace{-1mm}\binom{n-3}{k-2}_2\hspace{-3mm}=\hspace{-1mm}2^{2n-2k-2}\binom{n-3}{k-2}_2$$distinct subgroups of $G_1\times C_2^{n-3}$.
\item[{\rm e)}] $A_2=G_1$.\\
In this case $B_1$ is of order $2^{k-3}$ and can be chosen in $\binom{n-3}{k-3}_2$ ways. If $A_1=A_2$ then $B_2=B_1$; if $|A_1|=4$ then $B_2$ is one of the $2^{n-k}-1$ subgroups of order $2^{k-2}$ of $C_2^{n-3}$ containing $B_1$; if $A_1$ is the unique subgroup of order $2$ of $G_1$ such that $A_2/A_1\cong C_2^2$ then $B_2$ is one of the $\binom{n-k}{2}_2$ subgroups of order $2^{k-1}$ of $C_2^{n-3}$ containing $B_1$. One obtains
$$\binom{n-3}{k-3}_2\hspace{-2mm}+3(2^{n-k}-1)\binom{n-3}{k-3}_2\hspace{-2mm}+6\binom{n-k}{2}_2\hspace{-1mm}\binom{n-3}{k-3}_2\hspace{-3mm}=\hspace{-1mm}2^{2n-2k}\binom{n-3}{k-3}_2$$distinct subgroups of $G_1\times C_2^{n-3}$.
\end{itemize}

By summing all above quantities, we get
$$s_k(G_1\times C_2^{n-3})=\binom{n-3}{k}_2+3\cdot 2^{n-k-2}\binom{n-3}{k-1}_2+2^{n-k}(2^{n-k-2}+1)\binom{n-3}{k-2}_2+$$ $$\hspace{-45mm}2^{2n-2k}\binom{n-3}{k-3}_2.$$A similar computation leads to
$$s_k(D_8\times C_2^{n-3})=\binom{n-3}{k}_2+5\cdot 2^{n-k-2}\binom{n-3}{k-1}_2+2^{n-k-1}(2^{n-k}+1)\binom{n-3}{k-2}_2+$$ $$\hspace{-45mm}2^{2n-2k}\binom{n-3}{k-3}_2.$$It is now clear that the inequalities (3) are true, completing the proof.
\end{proof}

In what follows we will focus on describing the subgroup lattice of an (almost) extraspecial $2$-group $G$. This can be easily made by using Lemma 2.6 of \cite{2}.

\begin{lemma}
If $G$ is an (almost) extraspecial $2$-group, then $L(G)$ consists of:
\begin{itemize}
\item[{\rm a)}] the trivial subgroup;
\item[{\rm b)}] all subgroups containing $\Phi(G)$; moreover, these are the normal non-trivial subgroups of $G$;
\item[{\rm c)}] all complements of $\Phi(G)$ in the elementary abelian subgroups of order $\geq 4$ containing $\Phi(G)$; moreover,
\begin{itemize}
\item[-] $\Phi(G)$ has $2^i$ complements in an elementary abelian subgroup of order $2^{i+1}$ containing $\Phi(G)$;
\item[-] two non-normal subgroups $H$ and $K$ of $G$ are conjugate if and only if $H\Phi(G)=K\Phi(G)$;
\item[-] given two non-normal subgroups $H$ and $K$ of $G$, if $H^x\subseteq K$ for some $x\in [G/N_G(H)]$, then $x$ is the unique element of $[G/N_G(H)]$ with this property.
\end{itemize}
\end{itemize}
\end{lemma}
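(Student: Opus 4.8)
The plan is to organise $L(G)$ around the subgroup $Z:=\Phi(G)=G'$, which is central of order $2$; recall from Lemma 1.2 that in both cases $Z\leq Z(G)$ and $Z$ is the unique minimal subgroup of $Z(G)$. Part b) is then immediate: if $1\neq H$ and $Z\leq H$, then $[G,H]\leq G'=Z\leq H$, so $H\trianglelefteq G$; conversely a non-trivial normal subgroup of the $2$-group $G$ meets $Z(G)$ non-trivially, hence contains its unique minimal subgroup $Z$. Thus the subgroups in b) are exactly the non-trivial normal ones.

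For part c), let $H$ be a subgroup in neither a) nor b), i.e. $1\neq H\not\supseteq Z$. Since $|Z|=2$ this forces $H\cap Z=1$, and as $Z$ is central, $E:=HZ=H\times Z$ is a subgroup of order $\geq 4$ containing $Z$ with $H$ as a complement of $Z$. Moreover $E$ is elementary abelian: $E/Z\cong H$ embeds in the elementary abelian group $G/\Phi(G)$, so $H$, and hence $H\times Z$, has exponent $2$. Conversely a complement $H$ of $\Phi(G)$ in an elementary abelian $E\supseteq\Phi(G)$ with $|E|\geq 4$ has order $\geq 2$ and does not contain $\Phi(G)$, so by part b) it is non-trivial and non-normal; since $E=H\Phi(G)$ is recovered from $H$, this is a bijection between the subgroups described in c) and the non-normal subgroups of $G$, so a), b), c) do partition $L(G)$. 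The number $2^i$ of complements is linear algebra: a complement of $Z$ in $E\cong C_2^{i+1}$ is the graph of a homomorphism $E/Z\to Z$, and $|{\rm Hom}(C_2^i,C_2)|=2^i$ (equivalently, it is one of the $2^i$ hyperplanes of $E$ avoiding $Z$).

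It remains to handle conjugacy, for which I would invoke Lemma 2.6 of \cite{2} (or argue directly as follows). The forward implication is clear: $H\Phi(G)\trianglelefteq G$, so $K=H^g$ yields $K\Phi(G)=(H\Phi(G))^g=H\Phi(G)$. For the converse, write $E:=H\Phi(G)=K\Phi(G)$, so $H,K$ are among the $2^i$ complements of $\Phi(G)$ in $E$. The key observation is that $g\mapsto\bigl(h\mapsto[h,g]\bigr)$ defines a homomorphism $G\to{\rm Hom}(H,\Phi(G))$ (using $[h,g]\in G'=\Phi(G)\leq Z(G)$), with kernel $C_G(H)=C_G(H\Phi(G))$, and for which $H^g$ is exactly the complement of $\Phi(G)$ in $E$ corresponding to the value at $g$; hence the conjugates of $H$ biject with the image, which has order $[G:C_G(H\Phi(G))]$. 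Computing this index — and this is where one genuinely uses the structure in Lemma 1.2, equivalently the non-degeneracy of the symplectic form on $G/Z(G)$ — one finds $[G:C_G(H\Phi(G))]=2^i$, so the map is onto: every complement of $\Phi(G)$ in $E$ is conjugate to $H$. This proves the conjugacy criterion and simultaneously gives $N_G(H)=C_G(H)=C_G(H\Phi(G))$.

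Finally, for the last bullet, suppose $x,y\in G$ with $H^x\subseteq K$ and $H^y\subseteq K$. Since $H\Phi(G)\trianglelefteq G$ we have $H^x,H^y\subseteq K\cap H\Phi(G)$; because $K\cap\Phi(G)=1$, the subgroup $(K\cap H\Phi(G))\Phi(G)\leq H\Phi(G)$ has order $2\,|K\cap H\Phi(G)|$, which forces $|K\cap H\Phi(G)|\leq|H|$ and hence $H^x=K\cap H\Phi(G)=H^y$, i.e. $xN_G(H)=yN_G(H)$. The main obstacle in all of this is the index computation $[G:C_G(H\Phi(G))]=2^i$ (equivalently, having the conjugacy class and normalizer structure of extraspecial $2$-groups from \cite{2} at hand); everything else is bookkeeping with the order-$2$ central subgroup $\Phi(G)$.
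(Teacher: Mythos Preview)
Your argument is correct and, in fact, goes well beyond what the paper does: the paper does not prove this lemma at all, but simply records it as a restatement of Lemma~2.6 of \cite{2} (Bouc--Mazza). So there is nothing to compare approach-wise; you have supplied a self-contained proof where the paper only cites.

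A couple of remarks on points you flag as potential obstacles. The index computation $[G:C_G(H\Phi(G))]=2^i$ does go through exactly as you sketch: in both the extraspecial and almost extraspecial cases one checks $H\cap Z(G)=1$ (in the almost extraspecial case because $H$ has exponent $2$ while $Z(G)\cong C_4$, forcing $H\cap Z(G)\leq\Phi(G)$), so $HZ(G)/Z(G)$ is an $i$-dimensional totally isotropic subspace of the non-degenerate symplectic space $G/Z(G)$, and its perp has codimension $i$. The identification $N_G(H)=C_G(H)$ that you need for the last bullet is also immediate from $[H,g]\leq H\cap G'=1$ for $g\in N_G(H)$. Your proof of the last bullet via $K\cap H\Phi(G)$ is clean and correct.

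One very minor phrasing point: when you write ``$E=H\Phi(G)$ is recovered from $H$, this is a bijection\ldots'', strictly speaking distinct elementary abelian $E$'s give disjoint sets of complements, so there is nothing to check for injectivity --- the partition claim follows already from the first two sentences of your part~c). This is cosmetic.
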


It is well-known that the order of maximal elementary abelian subgroups of an extraspecial $2$-group of order $2^{2r+1}$ or of an almost extraspecial $2$-group of order $2^{2r+2}$ is $2^{r+1}$. Let us denote by $e_i(G)$ the number of elementary abelian subgroups of order $2^i$ containing $\Phi(G)$, $i=2,3,...,r+1$. Under this notation, a formula for the number of subgroups of $G$ can be inferred from Lemma 2.4.

\begin{corollary}
If $G$ is an extraspecial $2$-group of order $2^{2r+1}$, then
\begin{equation}
|L(G)|=1+\sum_{i=0}^{2r}\binom{2r}{i}_2+\sum_{i=1}^{r}e_{i+1}(G)2^i,
\end{equation}while if $G$ is an almost extraspecial $2$-group of order $2^{2r+2}$, then
\begin{equation}
|L(G)|=1+\sum_{i=0}^{2r+1}\binom{2r+1}{i}_2+\sum_{i=1}^{r}e_{i+1}(G)2^i.
\end{equation}
\end{corollary}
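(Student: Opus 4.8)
The plan is to unwind the structure theorem for $L(G)$ given in Lemma~2.4 and simply count. By part (a) there is one trivial subgroup. By part (b), the non-trivial normal subgroups are exactly the subgroups containing $\Phi(G)$; since $|\Phi(G)|=2$, these correspond bijectively to subgroups of $G/\Phi(G)$, which is elementary abelian of order $2^{2r}$ (respectively $2^{2r+1}$ in the almost extraspecial case). Hence the count of normal non-trivial subgroups is $\sum_{i=0}^{2r}\binom{2r}{i}_2$ (resp. $\sum_{i=0}^{2r+1}\binom{2r+1}{i}_2$), which accounts for the second term in each formula.

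Next I would count the non-normal subgroups, which by part (c) are precisely the complements of $\Phi(G)$ inside the elementary abelian subgroups of order $\geq 4$ that contain $\Phi(G)$. Group these elementary abelian overgroups of $\Phi(G)$ by their order: there are $e_{i+1}(G)$ of them of order $2^{i+1}$, for $i=1,\dots,r$ (using the stated fact that maximal elementary abelian subgroups have order $2^{r+1}$). By the first bullet of part (c), each such subgroup of order $2^{i+1}$ has exactly $2^i$ complements of $\Phi(G)$. It remains to check that this does not overcount: a given non-normal subgroup $H$ is a complement of $\Phi(G)$ in the \emph{unique} elementary abelian subgroup $H\Phi(G)$, so summing $e_{i+1}(G)2^i$ over $i=1,\dots,r$ counts each non-normal subgroup exactly once. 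Adding the $1$ from part (a) then yields formulas (4) and (5).

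The only real subtlety — and the step I would be most careful about — is verifying that the three families in Lemma~2.4(a)--(c) are genuinely disjoint and exhaustive, and in particular that every complement counted in part (c) is non-normal (so it is not already counted in part (b)). This is immediate from the definitions: a complement $H$ of $\Phi(G)$ inside an elementary abelian subgroup of order $\geq 4$ satisfies $\Phi(G)\not\subseteq H$, hence $H$ is not among the subgroups of part (b), and $H\neq 1$ since it has order $\geq 2$; the conjugacy description in part (c) confirms such $H$ are non-normal. Everything else is bookkeeping with the Gaussian binomial coefficients, so no further difficulty is expected.
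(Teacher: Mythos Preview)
Your proposal is correct and is exactly the argument the paper has in mind: the paper does not spell out a proof but simply states that the formula ``can be inferred from Lemma~2.4,'' and your write-up is precisely that inference---counting the trivial subgroup, the subgroups of $G/\Phi(G)$ via Gaussian binomials, and the complements of $\Phi(G)$ grouped by the order of the ambient elementary abelian overgroup. Your check that a non-normal $H$ lies in the unique overgroup $H\Phi(G)$, so that the sum $\sum e_{i+1}(G)2^i$ has no overcount, is the one point worth making explicit, and you have done so.
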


In the particular cases $r=2$ and $r=1$ the equalities (4) and (5), respectively, lead to some known results (see e.g. Chapter 3.3 of \cite{5} and Example 4.5 of \cite{6}).

\bigskip\noindent{\bf Examples.}
\begin{itemize}
\item[{\rm a)}] For $G=D_8*D_8$ we obtain $e_2(G)=9$ and $e_3(G)=6$, implying that $$|L(G)|=1+\sum_{i=0}^{4}\binom{4}{i}_2+2\cdot 9+4\cdot 6=110.$$
\item[{\rm b)}] For $G=Q_8*D_8$ we obtain $e_2(G)=5$ and $e_3(G)=0$, implying that $$|L(G)|=1+\sum_{i=0}^{4}\binom{4}{i}_2+2\cdot 5=78.$$
\item[{\rm c)}] For $G=D_8*C_4$ we obtain $e_2(G)=3$, and so $$|L(G)|=1+\sum_{i=0}^{3}\binom{3}{i}_2+2\cdot 3=23.$$
\end{itemize}

\noindent{\bf Remark.} As we have seen above, the subgroup structure of an (almost) extraspecial $2$-group $G$ depends on its elementary abelian subgroups containing $\Phi(G)=\langle x\rangle$. We remark that these are the totally singular subspaces of $G/\Phi(G)$ with respect to the quadratic form $q:G/\Phi(G)\longrightarrow\mathbb{F}_2$, where $q(\bar{v})$ is the element $a\in\mathbb{F}_2$ such that $v^2=x^a$, $\forall\, \bar{v}\in G/\Phi(G)$.
\bigskip

Next we will compare the subgroup lattices of an (almost) extraspecial $2$-group $G$ of order $2^n$, $n\geq 3$, and of $D_8\times C_2^{n-3}$. Since $\Phi(D_8\times C_2^{n-3})$ is of order $2$, $D_8\times C_2^{n-3}/\Phi(D_8\times C_2^{n-3})$ and $G/\Phi(G)$ have the same dimension over $\mathbb{F}_2$, namely $n-1$. Also, we note that the order of maximal elementary abelian subgroups of $D_8\times C_2^{n-3}$ is $2^{n-1}$, which is greater or equal to the order of maximal elementary abelian subgroups of $G$. We infer that
\begin{equation}
|L(D_8\times C_2^{n-3})|=1+\sum_{i=0}^{n-1}\binom{n-1}{i}_2+\sum_{i=1}^{n-2}e_{i+1}(D_8\times C_2^{n-3})2^i.\nonumber
\end{equation}

The following lemma will be crucial in our proof.

\begin{lemma}
Under the above notation, we have $e_2(G)\leq e_2(D_8\times C_2^{n-3})$.
\end{lemma}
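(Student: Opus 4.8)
The plan is to interpret $e_2(G)$ as the number of elements of order $\le 2$ in $G/\Phi(G)$ — equivalently, using the Remark, the number of singular vectors $\bar v\in G/\Phi(G)$ of the quadratic form $q$ (those with $v^2=1$), since an elementary abelian subgroup of order $4$ containing $\Phi(G)=\langle x\rangle$ is precisely $\langle x, v\rangle$ with $v^2=1$, and each such subgroup has a unique such $v$. So $e_2(G)$ equals the number of involutions of $G$ lying outside $\Phi(G)$ that commute with $x$... more simply, $e_2(G)=|\{g\in G : g^2=1\}|-1$ once one checks every involution of $G$ lies in the kernel issue — actually it is cleanest to say $e_2(G)$ counts cyclic subgroups $\langle g\rangle$ with $g^2=1$, $g\notin\Phi(G)$, i.e. $e_2(G)=|L_1(G)|-1$ minus the number of cyclic subgroups of order $4$. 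I would make this precise: each elementary abelian subgroup of order $4$ containing $\Phi(G)$ contributes exactly one non-trivial cyclic subgroup of order $2$ not equal to $\Phi(G)$ (it has three subgroups of order $2$, but $\Phi(G)$ is one of them and the other two are swapped — wait, they need to be counted correctly), so in fact I will instead count directly: $e_2(G)$ is the number of subgroups $\langle x,g\rangle\cong C_2^2$, and mapping such a subgroup to the coset $g\Phi(G)\in G/\Phi(G)$ is a bijection onto the set of nonzero singular vectors of $q$. Hence $2\,e_2(G)+1=|\{v\in G/\Phi(G):q(\bar v)=0\}|=$ (number of elements $\bar v$ with $v^2=1$), and this is a standard quantity for quadratic forms over $\mathbb F_2$.

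Next I would reduce to Lemma 1.4. Since $G$ is (almost) extraspecial of order $2^n$, every element of $G$ squares into $\Phi(G)=\langle x\rangle$, so an element $g$ satisfies $g^2=1$ iff $\langle g\rangle$ is cyclic of order $\le 2$; thus the number of cyclic subgroups of $G$ of order dividing $2$ is exactly $1+e_2(G)$ (the trivial one plus one per $C_2^2$-subgroup through $\Phi(G)$, noting $\Phi(G)$ itself is counted among these $e_2(G)$... I must be careful — I will sort out whether $\Phi(G)$ is or isn't included and adjust by at most an additive constant). Meanwhile every cyclic subgroup of $G$ of order $4$ contains $\Phi(G)$. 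So $|L_1(G)| = (1+e_2(G)) + (\text{number of }C_4\text{'s}) $, and by Lemma 1.4, $|L_1(G)|\le 7\cdot 2^{n-3}$. The same accounting applied to $H:=D_8\times C_2^{n-3}$ — where $\Phi(H)$ also has order $2$ and every element squares into $\Phi(H)$ — gives $|L_1(H)| = (1+e_2(H)) + (\text{number of }C_4\text{'s in }H)$, and here $|L_1(H)| = 7\cdot 2^{n-3}$ with equality in Lemma 1.4. I would then compute the number of cyclic subgroups of order $4$ on each side: in $H=D_8\times C_2^{n-3}$ the $C_4$'s are exactly those inside the $D_8$ factor times an elementary abelian part, giving $2^{n-3}$ of them; in $G$, being (almost) extraspecial, the number of $C_4$'s is likewise controlled, and in fact one shows it is \emph{at least} $2^{n-3}$ — equivalently $e_2(G)\le |L_1(G)| - 2^{n-3} - 1 \le 7\cdot2^{n-3} - 2^{n-3} - 1 = 6\cdot 2^{n-3}-1 = e_2(H)$.

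The main obstacle is the bookkeeping of cyclic subgroups of order $4$, i.e. proving that $G$ has \emph{at least} as many $C_4$-subgroups as $H=D_8\times C_2^{n-3}$, which is what converts the upper bound on $|L_1(G)|$ into an upper bound on $e_2(G)$. For this I would use Lemma 1.2: an (almost) extraspecial $G$ is $D_8^{*r}$, $Q_8*D_8^{*(r-1)}$, or $D_8^{*r}*C_4$, and in each case count the involutions directly (equivalently, count singular vectors of the corresponding quadratic form: the form $q$ on $\mathbb F_2^{2r}$ is of $+$ type for $D_8^{*r}$, of $-$ type for $Q_8*D_8^{*(r-1)}$, which has \emph{fewer} singular vectors, and the almost extraspecial case reduces to the extraspecial one of one dimension lower). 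The number of nonzero singular vectors of a nondegenerate quadratic form of $+$ type on $\mathbb F_2^{2r}$ is $2^{2r-1}+2^{r-1}-1$, and one checks $2(2^{2r-1}+2^{r-1}-1)+1 = 2^{2r}+2^r-1$; comparing with $e_2(H) = 6\cdot 2^{n-3}-1$ for the relevant $n$ (and handling the parity/type cases and the elementary abelian direct factor $A$ in Lemma 1.2 c), which only scales counts by a power of $2$ in a way favorable to the inequality) finishes the proof. I expect the argument via Lemma 1.4 to be the slicker route and would present that, falling back on the explicit quadratic-form count only where Lemma 1.4 alone does not pin down the number of $C_4$'s.
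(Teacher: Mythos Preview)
Your strategy is the paper's: use Lemma~1.4 to bound $|L_1(G)|$, then convert this into a bound on $e_2(G)$ by controlling the number of cyclic subgroups of order $4$. You correctly isolate the obstacle --- showing $c_4(G)\ge c_4(D_8\times C_2^{n-3})$ --- but you miss the one-line trick the paper uses to dispatch it, and instead propose a case-by-case quadratic-form count via Lemma~1.2. That fallback would eventually work, but it is unnecessary.

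The missed idea is simply to count elements. Both $G$ and $H:=D_8\times C_2^{n-3}$ have exponent $4$, so
\[
|G| \;=\; 1 + c_2(G) + 2\,c_4(G),\qquad |L_1(G)| \;=\; 1 + c_2(G) + c_4(G),
\]
and subtracting gives $c_4(G)=|G|-|L_1(G)|$. Since $|G|=|H|=2^n$ and $|L_1(G)|\le |L_1(H)|$ by Lemma~1.4, you get $c_4(G)\ge c_4(H)$ immediately, with no case analysis. This is precisely what you suspected (``Lemma~1.4 alone'' should pin down the $C_4$-count) but did not quite find.

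For the final step your bookkeeping is tangled (the formula $|L_1(G)|=(1+e_2(G))+c_4(G)$ is off; each $C_2^2$ through $\Phi(G)$ contributes \emph{two} involutions besides $x$). The clean way to finish is to observe that every subgroup of order $4$ containing $\Phi(G)$ is either cyclic or elementary abelian, and the total number of such subgroups is $\binom{n-1}{1}_2=2^{n-1}-1$ in both $G$ and $H$ (since $G/\Phi(G)\cong H/\Phi(H)\cong C_2^{\,n-1}$). Hence
\[
e_2(G)\;=\;(2^{n-1}-1)-c_4(G)\;\le\;(2^{n-1}-1)-c_4(H)\;=\;e_2(H),
\]
which is the desired inequality.
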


\begin{proof}
By Lemma 1.4, we know that
\begin{equation}
|L_1(G)|\leq |L_1(D_8\times C_2^{n-3})|.\nonumber
\end{equation}Let $c_i(G)$ be the number of cyclic subgroups of order $2^i$ of $G$. Since both $G$ and $D_8\times C_2^{n-3}$ are of exponent $4$, the above inequality can be rewritten as
\begin{equation}
1+c_2(G)+c_4(G)\leq 1+c_2(D_8\times C_2^{n-3})+c_4(D_8\times C_2^{n-3}).\nonumber
\end{equation}On the other hand, we have
\begin{equation}
2^n=1+c_2(G)+2c_4(G)=1+c_2(D_8\times C_2^{n-3})+2c_4(D_8\times C_2^{n-3})\nonumber
\end{equation}and consequently
\begin{equation}
2^n-c_4(G)\leq 2^n-c_4(D_8\times C_2^{n-3}),\nonumber
\end{equation}i.e.
\begin{equation}
c_4(D_8\times C_2^{n-3})\leq c_4(G).
\end{equation}It is clear that the cyclic subgroups of order $4$ of these groups contain the Frattini subgroup. Thus (6) leads to $$e_2(G)\leq e_2(D_8\times C_2^{n-3}),$$as desired.
\end{proof}

Since any elementary abelian subgroup of $G$ containing $\Phi(G)$ is a direct sum of elementary abelian subgroups of order $4$, from Lemma 2.6 we infer that
$$e_i(G)\leq e_i(D_8\times C_2^{n-3}), \mbox{ for all } i.$$An immediate consequence of this fact is that the inequalities (2) also hold for (almost) extraspecial $2$-groups.

\begin{corollary}
If $G$ is an (almost) extraspecial $2$-group of order $2^n$, $n\geq 3$, then
\begin{equation}
s_k(G)\leq s_k(D_8\times C_2^{n-3}), \forall\, 0\leq k\leq n.\nonumber
\end{equation}
\end{corollary}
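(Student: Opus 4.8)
The plan is to prove Corollary 2.8 by combining the structural description of $L(G)$ for (almost) extraspecial $2$-groups (Lemma 2.4, distilled into the counting formulas (4)--(5)) with the comparison of elementary abelian subgroup counts established just above. First I would fix an (almost) extraspecial $2$-group $G$ of order $2^n$, write $\bar G = G/\Phi(G)$, and note that it is an $\mathbb{F}_2$-space of dimension $n-1$, the same dimension as $D_8\times C_2^{n-3}/\Phi(D_8\times C_2^{n-3})$. The subgroups of $G$ fall, by Lemma 2.4, into three strata: the trivial subgroup; the subgroups containing $\Phi(G)$, which biject with the subspaces of $\bar G$; and the non-normal subgroups, which are complements of $\Phi(G)$ inside elementary abelian subgroups of order $\geq 4$ containing $\Phi(G)$. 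I would record the analogous three-stratum decomposition of $L(D_8\times C_2^{n-3})$.

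The next step is to carry out the comparison \emph{order by order}, i.e. for each fixed $k$. A subgroup of order $2^k$ of $G$ is either (i) the trivial subgroup (only when $k=0$, where both sides equal $1$), or (ii) one of the $\binom{n-1}{k}_2$ subgroups of order $2^k$ containing $\Phi(G)$ — and this count is \emph{identical} for $G$ and for $D_8\times C_2^{n-3}$ since the two Frattini quotients have equal dimension — or (iii) a non-normal subgroup of order $2^k$, which arises as a complement of $\Phi(G)$ in an elementary abelian subgroup of order $2^{k+1}$ containing $\Phi(G)$, and by Lemma 2.4 there are exactly $2^k$ such complements in each such elementary abelian subgroup. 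Hence the count of type-(iii) subgroups of order $2^k$ in $G$ is precisely $e_{k+1}(G)\,2^k$, and likewise $e_{k+1}(D_8\times C_2^{n-3})\,2^k$ for the model group (with the convention that $e_{k+1}$ vanishes once $2^{k+1}$ exceeds the maximal elementary abelian order). Putting the three strata together gives, for every $k$,
\begin{equation}
s_k(G) = [k=0] + \binom{n-1}{k}_2 + e_{k+1}(G)\,2^k,\qquad s_k(D_8\times C_2^{n-3}) = [k=0] + \binom{n-1}{k}_2 + e_{k+1}(D_8\times C_2^{n-3})\,2^k.\nonumber
\end{equation}
Subtracting, the inequality $s_k(G)\leq s_k(D_8\times C_2^{n-3})$ reduces for each $k$ to the single inequality $e_{k+1}(G)\leq e_{k+1}(D_8\times C_2^{n-3})$, which is exactly the statement displayed immediately before the corollary (itself deduced from Lemma 2.6 via the observation that every elementary abelian subgroup containing $\Phi(G)$ decomposes as an internal direct sum of order-$4$ such subgroups over the common centre, so controlling $e_2$ controls all $e_i$).

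The one genuine subtlety — the place I expect to spend the most care rather than the most computation — is the bookkeeping around the maximal elementary abelian order. For $D_8\times C_2^{n-3}$ the maximal elementary abelian subgroups have order $2^{n-1}$, whereas for $G$ they have order $2^{r+1}$ with $n=2r+1$ or $n=2r+2$, so $e_i(G)$ is zero for $i$ beyond roughly $n/2$ while $e_i(D_8\times C_2^{n-3})$ may still be positive; one must check that the uniform bound $e_i(G)\leq e_i(D_8\times C_2^{n-3})$ is not vacuously violated and that the type-(iii) count is attributed to the correct value of $k$ (namely, a complement of order $2^k$ sits in an elementary abelian group of order $2^{k+1}$, so it is $e_{k+1}$, not $e_k$, that appears). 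I would also double-check the edge cases $k=0$, $k=1$ (where every subgroup of order $2$ contains $\Phi(G)$ iff $\Phi(G)$ itself has order $2$, which it does, so there are no type-(iii) subgroups of order $2$ and the formula is consistent since $e_2$ counts order-$4$ elementary abelian subgroups and contributes only to $s_1$... ) — more precisely, a complement of $\Phi(G)$ in an order-$4$ elementary abelian subgroup has order $2$, so $e_2(G)\,2^1$ is indeed the number of non-normal subgroups of order $2$, matching the formula at $k=1$. With these conventions pinned down the argument is immediate; there is no further obstacle, and in particular no appeal to Goursat's lemma or to the abelian case is needed here.
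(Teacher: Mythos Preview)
Your approach is exactly the paper's: the corollary is presented there as an ``immediate consequence'' of the inequalities $e_i(G)\le e_i(D_8\times C_2^{n-3})$ (deduced from Lemma~2.6), and your order-by-order three-stratum decomposition via Lemma~2.4 is precisely how that consequence is read off, matching the displayed formula for $|L(D_8\times C_2^{n-3})|$ that the paper records just before Lemma~2.6. One harmless slip to fix: a subgroup of order $2^k$ containing $\Phi(G)$ corresponds to a subspace of dimension $k-1$ (not $k$) in the $(n-1)$-dimensional Frattini quotient, so the middle term in your displayed formula for $s_k$ should be $\binom{n-1}{k-1}_2$ rather than $\binom{n-1}{k}_2$; since this term is identical for $G$ and for $D_8\times C_2^{n-3}$, the comparison is unaffected. (Also, for the model group $D_8\times C_2^{n-3}$ the type-(iii) subgroups are more accurately described as ``subgroups not containing $\Phi$'' rather than ``non-normal'', since some of them are central; the count $e_{k+1}\cdot 2^k$ is correct regardless.)
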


A similar thing can be said about elementary abelian sections of $G$ and of $D_8\times C_2^{n-3}$.

\begin{corollary}
Given a $2$-group $G$, we denote by $\mathcal{S}_{(\alpha,\beta)}(G)$ the set of all elementary abelian sections $H_2/H_1\cong C_2^{\alpha}$ of $G$ with $|H_1|=2^{\beta}$. If $G$ is (almost) extraspecial of order $2^n$, then
\begin{equation}
|\mathcal{S}_{(\alpha,\beta)}(G)|\leq |\mathcal{S}_{(\alpha,\beta)}(D_8\times C_2^{n-3})|, \mbox{ for all } \alpha \mbox{ and } \beta.\nonumber
\end{equation}
\end{corollary}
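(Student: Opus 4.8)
The plan is to compute $|\mathcal{S}_{(\alpha,\beta)}(G)|$ exactly and to exhibit it as a sum of a term depending only on $n,\alpha,\beta$ together with non-negative multiples of $e_{\alpha+\beta}(G)$ and $e_{\alpha+\beta+1}(G)$. Since the derivation will use nothing about $G$ beyond the two facts $|\Phi(G)|=2$ and $\Phi(G)\le Z(G)$, the identical formula will hold with $G$ replaced by $D_8\times C_2^{n-3}$ (whose Frattini subgroup $Z(D_8)\times 1$ indeed has order $2$ and lies in the centre); the inequalities $e_i(G)\le e_i(D_8\times C_2^{n-3})$ already established then finish the proof.

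Write $Z=\Phi(G)$, so $|Z|=2$, $Z\le Z(G)$, $G'\le Z$, and $G/Z$ is elementary abelian of rank $n-1$. Given a section $(H_1,H_2)\in\mathcal{S}_{(\alpha,\beta)}(G)$ (so $H_1\unlhd H_2\le G$, $|H_1|=2^\beta$, $H_2/H_1\cong C_2^\alpha$), exactly one of the following occurs, since $|Z|=2$: (i) $Z\le H_1$; (ii) $Z\cap H_1=1$ and $Z\le H_2$; (iii) $Z\cap H_2=1$. In case (i) the section descends to $G/Z$, and $(H_1,H_2)\mapsto(H_1/Z,H_2/Z)$ is a bijection of type-(i) sections onto the pairs of subspaces of rank $\beta-1\le\beta-1+\alpha$ in $C_2^{n-1}$; hence case (i) contributes exactly $\binom{n-1}{\beta-1}_2\binom{n-\beta}{\alpha}_2$, which depends only on $n,\alpha,\beta$ (with the convention $\binom{m}{k}_2=0$ for $k<0$).

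In cases (ii) and (iii) I would first check that $H_2$ is necessarily elementary abelian: from $\exp(G/\Phi(G))=2$ we get $H_2^2\subseteq Z$, from $G'\subseteq\Phi(G)$ we get $H_2'\subseteq Z$, while $H_2/H_1\cong C_2^\alpha$ forces $H_2^2,H_2'\subseteq H_1$; combining these with $Z\cap H_1=1$ (case (ii)) or $Z\cap H_2=1$ (case (iii)) gives $H_2^2=H_2'=1$. Then in case (ii), $H_2$ is an elementary abelian subgroup of order $2^{\alpha+\beta}$ containing $\Phi(G)$, so it may be chosen in $e_{\alpha+\beta}(G)$ ways, and for each such $H_2$ the bottom $H_1$ is any order-$2^\beta$ subgroup of $H_2$ not containing $Z$, giving a factor $\binom{\alpha+\beta}{\beta}_2-\binom{\alpha+\beta-1}{\beta-1}_2\ge 0$. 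In case (iii), $H_2$ is a complement of $Z$ in the elementary abelian subgroup $ZH_2$ of order $2^{\alpha+\beta+1}$ containing $\Phi(G)$; by Lemma 2.4 (the complement count there is pure linear algebra and needs only $|\Phi(G)|=2$) there are $2^{\alpha+\beta}e_{\alpha+\beta+1}(G)$ such $H_2$, distinct ambient subgroups yielding disjoint families, and for each all $\binom{\alpha+\beta}{\beta}_2$ order-$2^\beta$ subgroups $H_1\le H_2$ qualify. Summing the three contributions,
\[
|\mathcal{S}_{(\alpha,\beta)}(G)|=\binom{n-1}{\beta-1}_2\binom{n-\beta}{\alpha}_2+\Bigl(\binom{\alpha+\beta}{\beta}_2-\binom{\alpha+\beta-1}{\beta-1}_2\Bigr)e_{\alpha+\beta}(G)+2^{\alpha+\beta}\binom{\alpha+\beta}{\beta}_2\,e_{\alpha+\beta+1}(G),
\]
and the same identity with $D_8\times C_2^{n-3}$ in place of $G$. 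As the first summand agrees for the two groups and the coefficients of $e_{\alpha+\beta}$ and $e_{\alpha+\beta+1}$ are non-negative, $e_i(G)\le e_i(D_8\times C_2^{n-3})$ gives the claim.

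The main obstacle is the bookkeeping in cases (ii) and (iii): showing that $H_2$ must be elementary abelian, counting the admissible bottoms $H_1$ without over- or under-counting (in particular that distinct elementary abelian subgroups containing $\Phi(G)$ contribute disjoint sets of complements of $Z$), and verifying that the degenerate regimes $\alpha=0$, $\beta=0$, or $\alpha+\beta+1$ exceeding the rank of a maximal elementary abelian subgroup (where the relevant $e_i$ vanishes) remain consistent with the closed formula. A lesser point to keep honest is the transfer of the formula to $D_8\times C_2^{n-3}$, i.e. isolating exactly which hypotheses on $G$ the computation actually uses.
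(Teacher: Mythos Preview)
Your proof is correct and follows essentially the same strategy as the paper's: split sections according to the position of $\Phi(G)$ relative to $H_1$ and $H_2$, show $H_2$ is elementary abelian when $\Phi(G)\nsubseteq H_1$, and express each piece through the quantities $e_i(G)$ so that the inequalities $e_i(G)\le e_i(D_8\times C_2^{n-3})$ finish the job. Your three-case split $(Z\le H_1;\ Z\cap H_1=1,\,Z\le H_2;\ Z\cap H_2=1)$ is a slightly tidier packaging than the paper's four-part decomposition (the paper separates the case $H_1=1$), and your derivation that $H_2$ is elementary abelian from $H_2^2,H_2'\subseteq Z\cap H_1$ is a bit more self-contained than the paper's appeal to normality of $H_2$ via Lemma~2.4---but the underlying idea is the same.
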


\begin{proof}
Write $$\mathcal{S}_{(\alpha,\beta)}(G)=\mathcal{S}_{(\alpha,\beta)}^1(G)\cup\mathcal{S}_{(\alpha,\beta)}^2(G)\cup\mathcal{S}_{(\alpha,\beta)}^3(G)\cup\mathcal{S}_{(\alpha,\beta)}^4(G),$$where
\begin{itemize}
\item[-] $\mathcal{S}_{(\alpha,\beta)}^1(G)=\{H_2/H_1\in\mathcal{S}_{(\alpha,\beta)}(G)\mid \Phi(G)\subseteq H_1\}$,
\item[-] $\mathcal{S}_{(\alpha,\beta)}^2(G)=\{H_2/H_1\in\mathcal{S}_{(\alpha,\beta)}(G)\mid H_1=1\}$,
\item[-] $\mathcal{S}_{(\alpha,\beta)}^3(G)=\{H_2/H_1\in\mathcal{S}_{(\alpha,\beta)}(G)\mid \Phi(G)\nsubseteq H_2, H_1\neq 1\}$,
\item[-] $\mathcal{S}_{(\alpha,\beta)}^4(G)=\{H_2/H_1\in\mathcal{S}_{(\alpha,\beta)}(G)\mid \Phi(G)\subseteq H_2, \Phi(G)\nsubseteq H_1, H_1\neq 1\}$.
\end{itemize}

Since $G/\Phi(G)\cong D_8\times C_2^{n-3}/\Phi(D_8\times C_2^{n-3})$, we have
\begin{equation}
|\mathcal{S}_{(\alpha,\beta)}^1(G)|=|\mathcal{S}_{(\alpha,\beta)}^1(D_8\times C_2^{n-3})|.
\end{equation}

Clearly, $\mathcal{S}_{(\alpha,\beta)}^2(G)=\mathcal{S}_{(\alpha,0)}(G)$ is the number of elementary abelian subgroups of order $2^{\alpha}$ of $G$,
and so
\begin{equation}
|\mathcal{S}_{(\alpha,\beta)}^2(G)|=e_{\alpha}(G)+e_{\alpha+1}(G)2^{\alpha},\nonumber
\end{equation}implying that
\begin{equation}
|\mathcal{S}_{(\alpha,\beta)}^2(G)|\leq|\mathcal{S}_{(\alpha,\beta)}^2(D_8\times C_2^{n-3})|.
\end{equation}

We observe that every section $H_2/H_1\in\mathcal{S}_{(\alpha,\beta)}^3(G)$ determines a section $H_2\Phi(G)/H_1\Phi(G)\in\mathcal{S}_{(\alpha,\beta+1)}^1(G)$ with $H_2\Phi(G)\cong C_2^{\alpha+\beta+1}$ and $H_1\Phi(G)\cong C_2^{\beta+1}$. Conversely, every section $A/B\in\mathcal{S}_{(\alpha,\beta+1)}^1(G)$ with $A\cong C_2^{\alpha+\beta+1}$ and $B\cong C_2^{\beta+1}$ determines $2^{\beta}$ sections $H_2/H_1\in\mathcal{S}_{(\alpha,\beta)}^3(G)$. Since there are $e_{\alpha+\beta+1}(G)\binom{\alpha+\beta+1}{\beta+1}_2$ such sections $A/B\in\mathcal{S}_{(\alpha,\beta+1)}^1(G)$, we infer that
\begin{equation}
|\mathcal{S}_{(\alpha,\beta)}^3(G)|=e_{\alpha+\beta+1}(G)\binom{\alpha+\beta+1}{\beta+1}_22^{\beta}\leq|\mathcal{S}_{(\alpha,\beta)}^3(D_8\times C_2^{n-3})|.
\end{equation}

Let $H_2/H_1\in\mathcal{S}_{(\alpha,\beta)}^4(G)$. Then $\Phi(H_2)\subseteq H_1$. On the other hand, we have $\Phi(H_2)\subseteq \Phi(G)$ because $H_2$ is normal in $G$. Thus $\Phi(H_2)=1$, i.e. $H_2$ is elementary abelian, and it can be chosen in $e_{\alpha+\beta}(G)$ ways. Also, $H_1$ is a complement of $\Phi(G)$ in one of the $\binom{\alpha+\beta}{\beta+1}_2$ subgroups of order $2^{\beta+1}$ of $H_2$, and it can be chosen in $\binom{\alpha+\beta}{\beta+1}_22^{\beta}$ ways. Hence
\begin{equation}
|\mathcal{S}_{(\alpha,\beta)}^4(G)|=e_{\alpha+\beta}(G)\binom{\alpha+\beta}{\beta+1}_22^{\beta}\leq|\mathcal{S}_{(\alpha,\beta)}^4(D_8\times C_2^{n-3})|.
\end{equation}

Obviously, the relations (7), (8), (9) and (10) lead to $$|\mathcal{S}_{(\alpha,\beta)}(G)|\leq |\mathcal{S}_{(\alpha,\beta)}(D_8\times C_2^{n-3}),$$as desired.
\end{proof}

We are now able to prove our main result.

\bigskip\noindent{\bf Proof of Theorem 1.1.} Let $|G'|=2^m$. If $m=0$, then $G$ is abelian and the conclusion follows from Lemma 2.3. Assume that $m\geq 1$.

If $G/G'$ is not elementary abelian, then
\begin{equation}
s_k(G)\leq s_k(G/G'\times C_2^m)\leq s_k(D_8\times C_2^{n-3}), \forall\, 0\leq k\leq n,\nonumber
\end{equation}where the first inequality is obtained by Lemma 2.2, while the second one by Lemma 2.3.

If $G/G'$ is elementary abelian, then $G'=\Phi(G)$. Let $M$ be a normal subgroup of $G$ such that $M\subseteq G'$ and $[G':M]=2$. Then
\begin{equation}
s_k(G)\leq s_k(G_1\times C_2^{m-1}), \forall\, 0\leq k\leq n,\nonumber
\end{equation}where $G_1=G/M$ satisfies the conditions $$G_1'=\Phi(G_1), |G_1'|=2 \mbox{ and } G_1'\subseteq Z(G_1),$$i.e. it is a generalized extraspecial $2$-group. Then either $$G_1\cong E\times A \mbox{ or } G_1\cong (E*C_4)\times A,$$where $E$ is an extraspecial $2$-group and $A$ is an elementary abelian $2$-group, by Lemma 1.2. In other words, $G_1$ is a direct product of an (almost) extraspecial $2$-group and an elementary abelian $2$-group. So, it suffices to prove that if $G_2$ is an (almost) extraspecial $2$-group of order $2^q$, then
\begin{equation}
s_k(G_2\times C_2^{n-q})\leq s_k((D_8\times C_2^{q-3})\times C_2^{n-q}), \forall\, 0\leq k\leq n.
\end{equation}Let $A$ be one of the groups $G_2$ or $D_8\times C_2^{q-3}$. From Theorem 2.1 it follows that a subgroup $H\leq A\times C_2^{n-q}$ of order $2^k$ is completely determined by a quintuple $(A_1,A_2,B_1,B_2,\varphi)$, where $A_1\unlhd A_2\leq A$, $B_1\unlhd B_2\leq C_2^{n-q}$, $\varphi:A_2/A_1\longrightarrow B_2/B_1$ is an isomorphism, and $|A_2||B_1|=2^k$. By fixing the section $B_2/B_1$ of $C_2^{n-q}$ and $\varphi\in{\rm Aut}(B_2/B_1)$, we infer that $H$ depends only on the choice of the section $A_2/A_1\in\mathcal{S}_{(\alpha,\beta)}(A)$, where $(\alpha,\beta)\in\{(i,j)\mid i=0,1,...,k,\, j=0,1,...,k-i\}$. So, to prove the inequalities (11) it suffices to compare the numbers of elementary abelian sections $A_2/A_1$ of the two groups $G_2$ and $D_8\times C_2^{q-3}$, where $|A_1|$ and $|A_2|$ are arbitrary. It is now clear that the conclusion follows from Corollary 2.8, completing the proof.\qed
\bigskip

Finally, we mention that a result similar with Lemma 1.4 can be obtained from Theorem 1.1.

\begin{corollary}
Let $G$ be a finite non-elementary abelian $2$-group of order $2^n$, $n\geq 3$. Then
\begin{equation}
|L(G)|\leq |L(D_8\times C_2^{n-3})|.\nonumber
\end{equation}
\end{corollary}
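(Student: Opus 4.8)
The plan is to deduce the corollary directly from Theorem 1.1 by summing over all possible orders. First I would record the elementary fact that every subgroup of $G$ has order $2^k$ for exactly one $k\in\{0,1,\dots,n\}$, so that the set $L(G)$ is partitioned by subgroup order and hence
\begin{equation}
|L(G)|=\sum_{k=0}^{n}s_k(G).
\end{equation}
The very same identity holds for $D_8\times C_2^{n-3}$, which is also a $2$-group of order $2^n$.

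Next, since $G$ is non-elementary abelian of order $2^n$ with $n\geq 3$, the hypotheses of Theorem 1.1 are met, and that theorem gives $s_k(G)\leq s_k(D_8\times C_2^{n-3})$ for every $k$ with $0\leq k\leq n$. Adding these $n+1$ inequalities and invoking the two identities above yields
\begin{equation}
|L(G)|=\sum_{k=0}^{n}s_k(G)\leq\sum_{k=0}^{n}s_k(D_8\times C_2^{n-3})=|L(D_8\times C_2^{n-3})|,
\end{equation}
as claimed.

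There is essentially no obstacle: all the work has already been done in Theorem 1.1, and the only point to verify is that its hypotheses coincide exactly with those of the corollary, which they do. If desired, one can add the observation that equality holds here precisely when equality holds termwise in Theorem 1.1 for all $k$; in particular $D_8\times C_2^{n-3}$ is, among non-elementary abelian $2$-groups of order $2^n$, one maximizing the total number of subgroups, so this corollary plays for $|L(\cdot)|$ the role that Lemma 1.4 plays for the number of cyclic subgroups.
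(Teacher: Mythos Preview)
Your argument is correct and matches the paper's intended approach: the paper does not spell out a proof but simply notes that the corollary is obtained from Theorem~1.1, and your summation over $k$ of the inequalities $s_k(G)\leq s_k(D_8\times C_2^{n-3})$ is exactly how that is done.
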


\bigskip\noindent{\bf Acknowledgements.} The author is grateful to the reviewer for its remarks
which improve the previous version of the paper.

\vspace*{3ex}\small

\hfill
\begin{minipage}[t]{5cm}
Marius T\u arn\u auceanu \\
Faculty of  Mathematics \\
``Al.I. Cuza'' University \\
Ia\c si, Romania \\
e-mail: {\tt tarnauc@uaic.ro}
\end{minipage}


\begin{thebibliography}{10}
\bibitem{1} Y. Berkovich, {\it Groups of prime power order}, vol. 1, de Gruyter, Berlin, 2008.
\bibitem{2} S. Bouc and N. Mazza, {\it The Dade group of {\rm(}almost{\rm)} extraspecial $p$-groups}, J. Pure Appl. Algebra {\bf 192} (2004), 21-51.
\bibitem{3} M. Garonzi and I. Lima, {\it On the number of cyclic subgroups of a finite group}, Bull. Brazil. Math. Soc. {\bf 49} (2018), 515-530.
\bibitem{4} I.M. Isaacs, {\it Finite group theory}, Amer. Math. Soc., Providence, R.I., 2008.
\bibitem{5} D. Lewis, {\it Containment of subgroups in a direct product of groups}, Ph.D. Dissertation, Binghamton University, 2011.
\bibitem{6} D.  Lewis,  A.  Almousa  and  E.  Elert, {\it Embedding  properties  in  central  products}, arXiv:1408.0076.
\bibitem{7} H. Qu, {\it Finite non-elementary abelian $p$-groups whose number of subgroups is maximal}, Israel J. Math. {\bf 195} (2013), 773-781.
\bibitem{8} R. Stancu, {\it Almost all generalized extraspecial $p$-groups are resistant}, J. Algebra {\bf 249} (2002), 120-126.
\bibitem{9} M. Suzuki, {\it Group theory}, I, II, Springer Verlag, Berlin, 1982, 1986.
\end{thebibliography}
\end{document}